\newtheorem{theorem}{Theorem}[section]
\newtheorem{lemma}[theorem]{Lemma}
\newtheorem{proposition}[theorem]{Proposition}
\newtheorem{corollary}[theorem]{Corollary}
\newtheorem{preexample}{Example}[section]
\newtheorem{preremark}{Remark}
\newcommand{\qed }{ \hfill $\Box$ }
\newcommand{\g}{\mathfrak{g}}
\newcommand{\mk}{\mathfrak{k}}
\newcommand{\ma}{\mathfrak{a}}
\newcommand{\mn}{\mathfrak{n}}
\newcommand{\p}{\mathfrak{p}}
\newcommand{\s}{\mathfrak{s}}
\renewcommand{\log}[1]{ \mathcal{L}{(#1)}}
\begin{document}

\title{A factorization of harmonic maps into Semisimple Lie groups}
\author{Sim\~{a}o N. Stelmastchuk \thanks{Partially supported by CNPq/Universal grant n$^{\circ}$ $476024/2012-9$} \\ Universidade Federal do Paran\'a \\ Jandaia do Sul, Brazil, simnaos@gmail.com}

\maketitle


\maketitle

\begin{abstract}
  We factorize harmonic maps with values in a semisimple Lie groups in a product of harmonic maps with values in the components of the Iwasawa decomposition. In particular, we use this factorization to study the harmonic maps from $\mathbb{R}^n$ into $SL(2,\mathbb{R})$.
\end{abstract}

\noindent {\bf Key words:}  harmonic maps, semisimple Lie groups, Iwasawa decomposition.

\vspace{0.3cm} \noindent {\bf MSC2010 subject classification:} 58E20, 22E46, 53C43.

\section{Introduction}

The study of harmonic maps on Lie groups and Homogeneous space trough factorization has been extensively used. Among all works that used this technic it is well known those due to K.K. Uhlenbeck \cite{uhlenbeck} and F.E. Burstall and M.A. Guest \cite{burstall}. Since for Iwasawa decomposition, the work due to V. Balan and J.F. Dorfmeister in \cite{dorfmeister1} gives a construction of an Iwasawa decomposition for loop groups, which allows us factorize the harmonic maps in Lie groups and symmetric spaces \cite{dorfmeister2}, \cite{dorfmeister3}, \cite{dorfmeister4} and \cite{dorfmeister5}.

The purpose of this work is to factorize harmonic maps with values in a semisimple Lie groups with respect to its Iwasawa decomposition. Let us explain the idea of our work. Let $G$ be a connected, semisimple Lie groups with finite center and consider the Iwasawa decomposition gives by a compact Lie group $K$, abelian Lie group $A$ and nilpotent Lie group $N$ such that $G= K\cdot A \cdot N$. Let $M$ be a Riemannian manifold and  $F:M \rightarrow G$ a smooth map. We may write $F = F_k\cdot F_a \cdot F_n$, where $F_k$, $F_a$ and $F_n$ are projections of $F$ in $K$, $A$ and $N$, respectively.

The idea is to equip $G$ with a left invariant metric $<,>$ and to restrict it to left invariant metrics on $K, A$, $N$, because the projections $\pi_K: G \rightarrow K$, $\pi_A: G \rightarrow A$ and  $\pi_N: G \rightarrow N$ are Riemannian submersion. From this we prove that if $F$ is a harmonic map, then so are $F_k$, $F_a$ and $F_n$. On contrary, which is our main contribution, we use the structure of semisimple Lie group $G$ to show that if $F_k$, $F_a$ and $F_n$ are harmonic maps, then so is $F = F_k\cdot F_a \cdot F_n$( see Theorem \ref{maintheorem}).

We emphasize that the domain of harmonic maps are general Riemannian manifolds. 

The interest of this result is that it allow us to study the harmonic maps in structures more simple.  Here, as an application, we study the harmonic maps from $\mathbb{R}^n$ into $SL(2,\mathbb{R})$. In fact, we use the fundamental solution of Laplace's equation to give the domain and solution of harmonic maps in distinct cases $n=2$ and $n\geq 3$. In the case of $n=1$ we study the geodesics from a interval $I$ into $Sl(2, \mathbb{R})$.

The author wishes to express his tanks to Prof. Alexandre J. Santana and Prof. Josiney A. Souza for several helpful comments concerning to real, semisimple Lie algebra used in this work.

\section{A factorization of harmonic maps}

From now on we use freely the concepts and notations of  A. Knap \cite{knap}. For the convenience of the reader we repeat something about semisimples structure, thus making our exposition self-contained. Let $G$ be a connected, semisimple Lie group with finite center and $g$ its semisimple Lie algebra. From Iwasawa decomposition we know that there are a compact Lie group $K$, abelian Lie group $A$ and nilpotent Lie group $N$ such that
\[
  G = K \cdot A \cdot N.
\]
In correspondence there are Lie subalgebras $\mk$, $\ma$ and $\mn$ such that $\g = \mk \oplus \ma \oplus \mn$.

In the Lie algebra $\g$ there exists a Cartan involution $\theta: \g \rightarrow \g$ such that $\g = \mk \oplus \p$, where $\mk= \{ X \in \g : \theta(X) = X\}$ and $\p = \{ X \in \g : \theta(X) = -X\}$. Furthermore, the symmetric bilinear form
\[
  B_{\theta}(X,Y) = - B(X,\theta(Y))
\]
is positive defined.

Let $\ma$ be a maximal abelian subspace of $\p$. For $\lambda \in \ma^*$ we write
\[
  \g_{\lambda} = \{ X \in \g | \, \, ad(H)(X) = \lambda(H) X\,\, \textrm{for all}\,\, H \in \ma \}.
\]
If $\g_\lambda \neq 0$ and $\lambda \neq 0$, we call $\lambda$ a restrict root of $\g$. Denote by $\Sigma$ the set of restrict roots. Then
\[
  \g = \g_0 \oplus \bigoplus_{\lambda \in \Sigma}\g_{\lambda},
\]
where $\g_0 = \ma \oplus Z_{\mk}(\ma)$ and $\mathfrak{m} = Z_{\mk}(\ma) = \{ X \in \mk : [X, H] = 0 \textrm{ for all}\,\, H \in \ma \}$.

From now we assume that $G$ has a left invariant metric $<,>$. It is well know that the Levi-Civita connection associated to $<,>$ is given by
\[
  \alpha(X,Y) = \frac{1}{2}([X,Y] - ad^*(X)(Y) - ad^*(Y)(X))\ \ X,Y \, \in \, \g,
\]
where $ad^*$ is the adjoint map of $ad$ with respect to metric $<,>$. Let us denote by $\alpha_\s$ the symmetric part of $\alpha$, namely,
\[
  \alpha_\s(X,Y) = \frac{1}{2}(- ad^*(X)(Y) - ad^*(Y)(X))\ \ X,Y \, \in \, \g.
\]
Using the properties of semisimples Lie algebras we can rewrite the symmetric part of $\alpha$ in an appropriate way.
\begin{proposition}\label{levicivita}
  If $X,Y \in \g$, then
  \[
    \alpha_\s(X,Y) = [X^k,Y^p]+ [Y^k,X^p],
  \]
  where $X^{k}$ and $X^{p}$ are projections of $X$ in $\mk$ and $\p$, respectively.
\end{proposition}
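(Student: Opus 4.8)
The plan is to work with the inner product on $\g$ given by the positive-definite form $B_\theta$ introduced above, extended to a left invariant metric, and to reduce the whole statement to a single structural identity for the metric adjoint $ad^*$. Indeed, the definition
\[
  \alpha_\s(X,Y) = \frac{1}{2}(-ad^*(X)(Y) - ad^*(Y)(X))
\]
shows that everything hinges on $ad^*$: once $ad^*$ is expressed through $\theta$ and the bracket, the rest is bookkeeping with the decomposition $\g = \mk \oplus \p$.

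First I would establish the key identity
\[
  ad^*(Z) = -ad(\theta Z), \qquad Z \in \g .
\]
To obtain it I would use two facts: the Killing form $B$ is $ad$-invariant, so $B([Z,X],Y) = -B(X,[Z,Y])$, and $\theta$ is an involutive Lie algebra automorphism, so $\theta^2 = \mathrm{id}$ and $\theta[U,V] = [\theta U, \theta V]$. Starting from $B_\theta([Z,X],Y) = -B([Z,X],\theta Y)$ and moving the bracket across $B$, I would rewrite the right-hand side as $B(X,[Z,\theta Y])$, and then reorganize it into the form $-B(X,\theta(-ad(\theta Z)Y)) = B_\theta(X,-ad(\theta Z)Y)$ by inserting $\theta^2 = \mathrm{id}$ and using that $\theta$ is an automorphism. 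Matching with the defining relation $B_\theta([Z,X],Y) = B_\theta(X, ad^*(Z)Y)$ yields the identity. I expect this computation to be the main obstacle, not because it is long but because it is the only place where the special structure of the metric enters, and every sign together with each use of $\theta^2 = \mathrm{id}$ must be handled correctly.

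With the identity in hand, I would substitute into the definition of $\alpha_\s$ to get
\[
  \alpha_\s(X,Y) = \frac{1}{2}\bigl([\theta X, Y] + [\theta Y, X]\bigr).
\]
Then I would insert $\theta X = X^k - X^p$ and $\theta Y = Y^k - Y^p$, which follow from $\theta = \mathrm{id}$ on $\mk$ and $\theta = -\mathrm{id}$ on $\p$, expand both brackets into their four $\mk/\p$ components, and simplify. The purely $\mk$-valued terms $[X^k,Y^k]+[Y^k,X^k]$ and the purely $\p$-valued terms $[X^p,Y^p]+[Y^p,X^p]$ cancel by antisymmetry of the bracket, while the mixed terms combine in pairs to give $2[X^k,Y^p] + 2[Y^k,X^p]$. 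The prefactor $\tfrac{1}{2}$ then produces exactly $[X^k,Y^p] + [Y^k,X^p]$, which is the asserted formula.
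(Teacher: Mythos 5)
Your proof is correct and takes essentially the same route as the paper: reduce everything to the identity $ad^*(Z) = -ad(\theta Z)$ and then expand $\frac{1}{2}([\theta X,Y]+[\theta Y,X])$ along $\g = \mk \oplus \p$, cancelling the pure $\mk$ and pure $\p$ terms. The only difference is that you derive the adjoint identity from the invariance of $B$ and the automorphism property of $\theta$ (and are explicit that the metric must be the one induced by $B_\theta$), whereas the paper simply cites it as Lemma 6.25 of Knapp.
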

\begin{proof}
We first observe that in a semisimple Lie algebra is true that $ad^*(X) = - ad(\theta(X))$ for $X \in \g$ (see Lemma 6.25 in \cite{knap}). Then for $X, Y \in \g$ we have
\begin{eqnarray*}
  \alpha_\s(X,Y)
  & = & \frac{1}{2}(-ad^*(X)(Y) -ad^*(Y)(X)) \\
  & = & \frac{1}{2}(ad(\theta(X))(Y)+ad(\theta(Y)(X))\\
  & = & \frac{1}{2}([\theta(X),Y] + [\theta(Y),X]).
\end{eqnarray*}
Writing $X = X^k + X^p$ and $Y = Y^k + Y^p$ where $X^k, Y^k \in \mk$ and $X^p, Y^p \in \p$ yields
\begin{eqnarray*}
  \alpha_\s(X,Y)
  & = & \frac{1}{2}([\theta(X^k + X^p),Y^k + Y^p] + [\theta(Y^k + Y^p),X^k + X^p])\\
  & = & \frac{1}{2}([\theta(X^k),Y^k]+[\theta(X^k),Y^p]+ [\theta(X^p),Y^k]+[\theta(X^p),Y^p]\\
  & + & [\theta(Y^k),X^k]+[\theta(Y^k),X^p] + [\theta(Y^p),X^k]+ [\theta(Y^p),X^p]).
\end{eqnarray*}
By definition of spaces $\mk$ and $\p$, it follows that
\begin{eqnarray*}
  \alpha_\s(X,Y)
  & = & \frac{1}{2}([X^k,Y^k]+[X^k,Y^p]+ [-X^p,Y^k]+[-X^p,Y^p]\\
  & + & [Y^k,X^k]+[Y^k,X^p] + [-Y^p,X^k]+ [-Y^p,X^p]).
\end{eqnarray*}
Canceling opposite terms we conclude that
\[
  \alpha_\s(X,Y) = [X^k,Y^p]+ [Y^k,X^p].
\]
\qed
\end{proof}

The proposition gain in interest if we realize that
\[
  \alpha_\s(Ad(g)X,Ad(g)Y) = Ad(g)\alpha_\s(X,Y)
\]
for $X,Y \in \g$ and $g \in G$.

In the sequel, we prove a technical lemma. The principal significance of the next lemma is that some cross terms in product $\mk \times \ma \times \mn$ vanishes with respect to symmetric part of $\alpha$.

\begin{lemma}\label{technical}
  For $X^k \in \mk$, $X^a \in \ma$ and $X^n \in \mn$ we have
  \[
    \alpha_\s(X^k,Ad(a) X^a)  = \alpha_\s(X^k,Ad(an)X^n) = \alpha_\s(X^a,Ad(n) X^n)  =  0,
  \]
  where $a \in A$ and $n \in N$.
\end{lemma}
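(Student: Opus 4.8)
The plan is to prove all three identities by the same two-step mechanism: first strip off the group twists $Ad(a)$, $Ad(an)$, $Ad(n)$ using the equivariance relation $\alpha_\s(Ad(g)X,Ad(g)Y)=Ad(g)\alpha_\s(X,Y)$ noted after Proposition~\ref{levicivita}, and then evaluate the resulting untwisted symmetric products with the explicit formula $\alpha_\s(X,Y)=[X^k,Y^p]+[Y^k,X^p]$. Before doing so I would record the three elementary facts that make the twists harmless: since $\ma$ is abelian, $Ad(a)X^a=X^a$; since $Ad(\exp H)$ acts on each root space $\g_\lambda$ by the scalar $e^{\lambda(H)}$, the factor $Ad(a)$ preserves $\mn$; and since $N=\exp\mn$ with $\mn$ a subalgebra, $Ad(n)$ preserves $\mn$. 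With these in hand, equivariance lets me move every $Ad$ outside $\alpha_\s$ and onto the argument it fixes or stabilizes, reducing each claim to a statement about $\alpha_\s$ between bare Iwasawa components.

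For the $\mk$--$\ma$ term the reduction is immediate: $Ad(a)X^a=X^a$, and since $X^k\in\mk$ contributes no $\p$-part while $X^a\in\ma\subset\p$ contributes no $\mk$-part, Proposition~\ref{levicivita} collapses the expression to the single bracket $[X^k,X^a]\in\p$; the vanishing I must then extract comes from the invariance of $B$, namely $B([X^k,H_1],H_2)=B(X^k,[H_1,H_2])=0$ for $H_1,H_2\in\ma$, which forces the component along $\ma$ to disappear. For the $\ma$--$\mn$ and $\mk$--$\mn$ terms I would first use equivariance to place the $Ad(n)$ (respectively $Ad(an)=Ad(a)Ad(n)$) outside, and then decompose each root vector $X\in\g_\lambda$ appearing in $X^n$ into its Cartan pieces $X^k=\tfrac12(X+\theta X)$ and $X^p=\tfrac12(X-\theta X)$, exploiting the eigenvalue identities $[H,X^k]=\lambda(H)X^p$ and $[H,X^p]=\lambda(H)X^k$ for $H\in\ma$. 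These let me rewrite every bracket produced by Proposition~\ref{levicivita} in terms of $\theta$ applied to root vectors, where the relations $\theta\g_\lambda=\g_{-\lambda}$ and $[\g_\lambda,\g_\mu]\subseteq\g_{\lambda+\mu}$ can be brought to bear.

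The step I expect to be the main obstacle is precisely the handling of the $\mn$-terms, because a restricted-root vector is never a $\theta$-eigenvector: both of its Cartan components are nonzero and are interchanged by the symmetry $\theta\g_\lambda=\g_{-\lambda}$, so the formula of Proposition~\ref{levicivita} yields several brackets that do not cancel termwise. Extracting the cancellation forces one to track, for each Cartan piece, which root space $\g_{\pm\lambda+\mu}$ the bracket lands in, and to keep account of the scalars $e^{\lambda(H)}$ and $\lambda(H)$ introduced by the $Ad(a)$-twist inside $Ad(an)$; the positivity of the roots in $\Sigma$ is what ultimately prevents the surviving brackets from returning to the subspace in which the identity is asserted. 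Once this root-space bookkeeping is organized, the outer equivariant factor $Ad(n)$ or $Ad(an)$ cannot disturb the conclusion, so each of the three vanishing statements follows from its single untwisted computation.
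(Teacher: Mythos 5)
Your plan front-loads the harmless part of the lemma and defers the only substantive part. The observations that $Ad(a)X^a=X^a$ and that $Ad(n)$ and $Ad(an)$ preserve $\mn$ are correct and do reduce all three identities to untwisted statements about $\alpha_\s$ on bare Iwasawa components; note, however, that the equivariance relation $\alpha_\s(Ad(g)X,Ad(g)Y)=Ad(g)\alpha_\s(X,Y)$ is not what accomplishes this reduction. Applied to $\alpha_\s(X^k,Ad(g)Y)$ it yields $Ad(g)\,\alpha_\s(Ad(g^{-1})X^k,Y)$, and $Ad(g^{-1})X^k$ has left $\mk$, so nothing is gained; what you actually use is only that the twisted argument stays inside $\ma$ or $\mn$. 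The real issue is that the reduced claims are exactly where the content of the lemma lives, and your proposal only gestures at ``root-space bookkeeping'' and ``positivity of the roots'' without exhibiting any cancellation.

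When one carries out that bookkeeping, it does not close. Take $\g=\mathfrak{sl}(2,\mathbb{R})$ with $\theta(X)=-X^{T}$ (so the metric is $B_\theta$, which is what Proposition~\ref{levicivita} presupposes via $ad^*=-ad\circ\theta$), and set $H=\mathrm{diag}(1,-1)\in\ma$, $E=\left(\begin{smallmatrix}0&1\\0&0\end{smallmatrix}\right)\in\mn$, $F=E^{T}$, $W=E-F\in\mk$. Then Proposition~\ref{levicivita} gives $\alpha_\s(W,H)=[W,H]=-2(E+F)\neq 0$, $\alpha_\s(W,E)=[W,E^{p}]=\tfrac12[E-F,E+F]=H\neq 0$, and $\alpha_\s(H,E)=[E^{k},H]=-(E+F)\neq 0$; taking $a$ and $n$ to be identity elements, all three asserted identities already fail. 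Structurally, $[\mk,\ma]$ is spanned by the vectors $\xi-\theta\xi$ with $\xi\in\g_\lambda$, which are nonzero whenever a restricted root exists, so no tracking of root spaces or of the scalars $e^{\lambda(H)}$ will produce the cancellation you are counting on. The step you yourself flag as ``the main obstacle'' is therefore not an obstacle to be organized away but the point at which the argument (and the statement as written) breaks down. For comparison, the paper's own proof takes a different route --- testing the bracket $[X^k,Ad(a)X^a]$ against a $B_\theta$-orthogonal basis via invariance of the Killing form --- but it relies on the identification $\p=\ma\oplus\mn$ and on $B_\theta(\mk,\g_{-\lambda})=0$, neither of which holds, so it does not repair the gap either.
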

\begin{proof}
  The basic idea is to use the root system associated to Lie algebra $\g$. In fact, let $\beta_{\ma} = \{H_1, \ldots, H_{m}\}$, $\beta_{\mathfrak{m}}=\{L_{1} ,\ldots, L_{n}\}$ and $\beta_{\lambda}= \{ \xi^{\lambda}_1, \ldots, \xi^{\lambda}_{n_{\lambda}}\}$ be basis of $\ma, \mathfrak{m}$ and $\g_{\lambda}$, respectively, such that $\beta = \beta_{\ma} \cup \beta_{\mathfrak{m}} \cup \bigcup_{\lambda \in \Sigma} \beta_{\lambda}$ is a orthogonal basis of $\g$ with respect to $B_\theta$. We begin showing that $\alpha_\s(X^k,Ad(a) X^a) = 0$ for $X^k \in \mk , X^a \in \ma$. Since $[X^k,Ad(a)X^a] \in \mathfrak{p} = \ma \oplus \mn$, it follows that
  \begin{eqnarray*}
    B_\theta([X^k,Ad(a)X^a], H_i)
    & = & - B(X^k,[X^a,\theta(H_i)]) = 0,
  \end{eqnarray*}
  where we used that $X^a,\theta(H_i) \in \ma$. Also, being $\mathfrak{n} = \sum_{\lambda>0} \g_\lambda$, for $\xi^{\lambda}_i \in \g_\lambda$ we see that
  \begin{eqnarray*}
    B_\theta([X^k,Ad(a)X^a], \xi^{\lambda}_i)
    & = & B_\theta(X^k,[X^a, \theta(\xi^{\lambda}_i)]) = 0,
  \end{eqnarray*}
  because $[X^a, \theta(\xi^{\lambda}_i)] \in \mathfrak{p}$. It follows that
  \begin{eqnarray*}
    \alpha_\s(X^k,Ad(a) X^a)
    & = & 0.
  \end{eqnarray*}
  Our next step is to show that $\alpha(X^k,Ad(an)X^n)=0$ for $ X^k \in \mk$, $ X^n \in \mn$.
  As $[X^k,Ad(an)X^n] \in \mathfrak{p} = \ma \oplus \mn$ we have
  \begin{eqnarray*}
    B_\theta([X^k,Ad(an)X^n], H_i)
    & = & - B_\theta(X^k,[Ad(an)X^n, \theta(H_i)])= 0,
  \end{eqnarray*}
  where we used that $[Ad(an)X^n, \theta(H_i)] \in \mathfrak{n}$. For $\xi^{\lambda}_i \in \g_\lambda$ we get
  \begin{eqnarray*}
    B_\theta([X^k,Ad(an)X^n], \xi^{\lambda}_i)
    & = & B_\theta(X^k,[Ad(an)X^n, \theta(\xi^{\lambda}_i)]) = 0
  \end{eqnarray*}
  because $[Ad(an)X^n, \theta(\xi^{\lambda}_i)] \in \mathfrak{p}$. In consequence,
  \begin{eqnarray*}
    \alpha_\s(X^k,Ad(an) X^n)
    & = & 0.
  \end{eqnarray*}
  Finally, Proposition \ref{levicivita} makes it is obvious that $\alpha_\s( X^a,Ad(n)X^n)= 0$ for $ X^a \in \ma$, $ X^n \in \mn$.
  \qed
\end{proof}

Our main theorem is now stated and proved.

\begin{theorem}\label{maintheorem}
  Let $G$ be a semisimple Lie group with a left invariant metric $<,>$. Assume that its Iwasawa decomposition is given by $G = K \cdot A \cdot N$. Furthermore, let $(M,g)$ be a Riemannian manifold and $F: M \rightarrow G$ a smooth map such that $F = F_k \cdot F_a \cdot F_n$, where $F_k:M \rightarrow K$,  $F_a: M \rightarrow A$ and $F_n: M \rightarrow N$ are projections of $F$ into $K$, $A$ and $N$, respectively. Then $F$ is a harmonic map if and only if and only if $F_k$, $F_a$ and $F_n$ are harmonic maps.
\end{theorem}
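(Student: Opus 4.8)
Both implications will follow from a single computation of the tension field inside the Lie algebra $\g$, so I would first record the relevant formula. For a smooth map $\Phi$ into a Lie group carrying the left invariant metric $<,>$, write $\beta=\Phi^{-1}d\Phi$ for its left logarithmic derivative, a $\g$-valued $1$-form on $M$, and set $\beta_i=\beta(e_i)$ for a local orthonormal frame $\{e_i\}$ on $M$. Since $d\Phi(X)=dL_{\Phi}\beta(X)$ and the Levi-Civita connection of $<,>$ is $\alpha$, the second fundamental form is $\nabla d\Phi(X,Y)=dL_{\Phi}\big(X(\beta(Y))-\beta(\nabla^{M}_{X}Y)+\alpha(\beta(X),\beta(Y))\big)$; tracing and using $\alpha(Z,Z)=\alpha_\s(Z,Z)$ gives
\[
 (dL_{\Phi})^{-1}\tau(\Phi)=\sum_i\big(e_i(\beta_i)-\beta(\nabla^{M}_{e_i}e_i)\big)+\sum_i\alpha_\s(\beta_i,\beta_i).
\]
I will call the first sum the linear term $\mathcal{D}(\beta)$ and the second the quadratic term. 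Thus $\Phi$ is harmonic if and only if this $\g$-valued expression vanishes, and the whole theorem amounts to showing that, for $F=F_k\cdot F_a\cdot F_n$, this expression reassembles out of the three corresponding expressions for $F_k,F_a,F_n$.

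Next I would compute the left logarithmic derivative of the product. Differentiating $F=F_k\cdot F_a\cdot F_n$ yields
\[
 \beta=Ad\big((F_aF_n)^{-1}\big)\beta_k+Ad\big(F_n^{-1}\big)\beta_a+\beta_n,
\]
where $\beta_k,\beta_a,\beta_n$ are the left logarithmic derivatives of the factors, valued in $\mk,\ma,\mn$. Substituting into the quadratic term produces three diagonal and three cross contributions. The diagonal $\mk$- and $\ma$-terms vanish pointwise: by Proposition \ref{levicivita}, $\alpha_\s(Z,Z)=2[Z^{k},Z^{p}]$, which is zero for $Z\in\mk$ and for $Z\in\ma\subset\p$, and this survives conjugation because of the equivariance $\alpha_\s(Ad(g)\,\cdot\,,Ad(g)\,\cdot\,)=Ad(g)\alpha_\s(\,\cdot\,,\,\cdot\,)$ recorded after the proposition. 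Each cross term is brought into the exact shape of Lemma \ref{technical} by factoring out a common $Ad$ and invoking the same equivariance; for example $Ad(F_aF_n)\,\alpha_\s\big(Ad((F_aF_n)^{-1})\beta_k,\beta_n\big)=\alpha_\s\big(\beta_k,Ad(F_aF_n)\beta_n\big)=\alpha_\s(X^{k},Ad(an)X^{n})=0$. Hence all cross terms die and the quadratic term of $F$ collapses to the single surviving contribution $\sum_i\alpha_\s(\beta_{n,i},\beta_{n,i})$ coming from $N$.

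The target identity I would then aim to establish is the exact second-order analogue of the displayed splitting of $\beta$, namely
\[
 (dL_{F})^{-1}\tau(F)=Ad\big((F_aF_n)^{-1}\big)\mathfrak{t}_k+Ad\big(F_n^{-1}\big)\mathfrak{t}_a+\mathfrak{t}_n,
\]
where $\mathfrak{t}_\bullet=(dL_{F_\bullet})^{-1}\tau(F_\bullet)$ is the intrinsic tension field of each factor (with $K$ bi-invariant, so $\mathfrak{t}_k\in\mk$ has no quadratic part; $A$ flat, so $\mathfrak{t}_a\in\ma$; and $\mathfrak{t}_n\in\mn$ carrying $N$'s own connection). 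Granting this identity, the theorem is immediate: the right-hand side is a sum of an element of $Ad((F_aF_n)^{-1})\mk$, an element of $Ad(F_n^{-1})\ma$, and an element of $\mn$, and applying $Ad(F_n)$ reduces the independence of these three subspaces to the directness of the Iwasawa sum $\g=\mk\oplus\ma\oplus\mn$. Hence the sum vanishes exactly when each summand does, i.e. precisely when $F_k,F_a,F_n$ are all harmonic.

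The main obstacle is proving the last identity, and it is concentrated in the linear term. Because $\mathcal{D}$ is only $\mathbb{R}$-linear, applying it to $Ad((F_aF_n)^{-1})\beta_k$ and $Ad(F_n^{-1})\beta_a$ forces the covariant derivative to differentiate the $Ad$-factors as well, generating extra commutators built from the logarithmic derivatives of $F_a$ and $F_n$. These coupling terms must be shown to combine with the surviving $N$-quadratic term $\sum_i\alpha_\s(\beta_{n,i},\beta_{n,i})$ — whose components along $\ma\oplus\mathfrak{m}$, and not only along $\mn$, reflect that $N$ is not totally geodesic in $G$ — so as to reconstitute exactly $Ad((F_aF_n)^{-1})\mathfrak{t}_k+Ad(F_n^{-1})\mathfrak{t}_a$ and no more. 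I would attack this by splitting everything according to $\g=\mk\oplus\ma\oplus\mn$ and using that $Ad(A)$ preserves each restricted root space while $Ad(N)$ preserves the filtration by $\mn$, which renders the coupling upper triangular; checking that the off-diagonal pieces cancel term by term is the delicate heart of the argument and the step I would scrutinize most carefully, since it is exactly there that the interaction between the non-orthogonality of the Iwasawa factors and the left invariant metric is felt.
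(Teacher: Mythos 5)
Your route is deterministic where the paper's is stochastic, and the deterministic version is not actually completed: the step you yourself flag as ``the delicate heart of the argument'' --- showing that the linear term $\mathcal{D}$ applied to $Ad((F_aF_n)^{-1})\beta_k+Ad(F_n^{-1})\beta_a+\beta_n$, together with the commutator terms produced by differentiating the $Ad$-factors and the surviving $\mn$-diagonal quadratic term, reassembles into exactly $Ad((F_aF_n)^{-1})\mathfrak{t}_k+Ad(F_n^{-1})\mathfrak{t}_a+\mathfrak{t}_n$ --- is stated as a target, not proved. This is a genuine gap, not a routine verification: the derivatives of $Ad((F_aF_n)^{-1})$ produce terms of the form $[\,\cdot\,,\beta_{k,i}]$ involving the \emph{antisymmetric} part of the connection, which Lemma \ref{technical} (a statement about $\alpha_\s$ only) does not touch, so a separate cancellation argument is required and none is supplied. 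There is also a small slip in the ``immediate'' step: applying $Ad(F_n)$ to your target identity leaves a summand in $Ad(F_a^{-1})\mk$, which is not $\mk$; you need to apply $Ad(F_aF_n)$ and use that $\ma$ is fixed by $Ad(A)$ and $\mn$ is preserved by $Ad(AN)$ before the directness of $\g=\mk\oplus\ma\oplus\mn$ can be invoked.

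For comparison, the paper sidesteps precisely the step you left open by working with a Brownian motion $B_t$ on $M$: harmonicity of $F$ is equivalent to $F(B_t)$ being a $\nabla$-martingale, which by Corollary 3.8 of \cite{stelmastchuk} is equivalent to the It\^o logarithm $\log{F(B_t)}$ being a local martingale in $\g$. The Hakim-Dowek--L\'epingle product formula gives the first-order splitting of $L(X_t)$ exactly (no leftover commutators), all second-order interaction is concentrated in the It\^o correction $\frac12\int\alpha_\s(dL,dL)$, whose cross terms are killed by Lemma \ref{technical} just as in your quadratic computation, and the conclusion follows because a stochastic integral of an adapted ($Ad$-valued) integrand against a local martingale is again a local martingale --- this last fact is what replaces the delicate cancellation your approach still owes. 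If you want to keep your tension-field formulation, you must either carry out that cancellation explicitly (splitting along $\g=\mk\oplus\ma\oplus\mn$ and using that $Ad(A)$ scales root spaces and $Ad(N)$ preserves $\mn$, as you suggest) or import the martingale characterization; as written, the proposal establishes the easy direction and the vanishing of the quadratic cross terms, but not the theorem.
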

\begin{proof}
  Let $B_t$ a Brownian motion in $M$. Then $F(B_t)$ is a semimartingale in $G$ and $F(B_t)= F_k(B_t)\cdot F_a(B_t)\cdot F_n(B_t)$, where $F_k(B_t)$, $F_a(B_t)$ and $F_n(B_t)$ are semimartingales in $K, A$ and $N$, respectively. For simplicity, we denote $X_t = F(B_t)$, $X^k_t = F_k(B_t)$, $X^a_t = F_a(B_t)$ and $X^n_t = F_n(B_t)$. We also denote by $\nabla$ the Levi-Civita connection associated with $<,>$. Suppose now that $F$ is a harmonic map, then $X_t$ is a $\nabla$-martingale. Since $\pi_k$, $\pi_a$ and $\pi_n$ are Riemannian submersion, it follows that $X^k_t$, $X^a_t$ and $X^n_t$ are $\nabla$-martingales. It entails that $F_k$, $F_a$ and $F_n$ are harmonic maps.

  On the contrary, the idea is to study the decomposition of It\^o stochastic logarithm $\log{X_t}$ in terms of Iwasawa decomposition $\mk\oplus\ma \oplus \mn$. For this end, we use Proposition 5.1 in \cite{stelmastchuk} that says
  \[
    \log{X_t} = L(X_t) + \frac{1}{2}\int_0^t \alpha_\s(dL(X_s),dL(X_s)).
  \]
  Thus it is necessary to study the decomposition of stochastic logarithm $L(X_t)$. In fact, from Proposition 5 in \cite{hakim} we deduce that
  \begin{eqnarray*}
    L(X_t) & = & L(X^k_t \cdot X^a_t \cdot X^n_t) \\
           & = & \int_0^t Ad((X^a_s \cdot X^n_s)^{-1})dL(X^k_s) + L(X^a_t \cdot X^n_t)\\
           & = & \int_0^t Ad((X^a_s \cdot X^n_s)^{-1})dL(X^k_s) + \int_0^t Ad((X^n_s)^{-1})dL(X^a_s) + L(X^n_t).
  \end{eqnarray*}
  From this we compute $\alpha(dL(X_t),dL(X_t))$. In fact, using the $Ad$-invariance property with respect to $\alpha_\s$ given by Proposition \ref{levicivita} we obtain
  \begin{eqnarray*}
    \alpha_\s(dL(X_t),dL(X_t))
    & = & Ad((X^a_t \cdot X^n_t)^{-1})\alpha_\s(dL(X^k_t),dL(X^k_t))\\
    & + & Ad((X^n_t)^{-1})\alpha_\s(dL(X^a_t),dL(X^a_t))\\
    & + & \alpha_\s(dL(X^n_t),dL(X^n_t))\\
    & + & 2Ad((X^n_t)^{-1})Ad((X^a_t)^{-1})\alpha_\s(dL(X^k_t), Ad(X^a_t)dL(X^a_t))\\
    & + & 2Ad((X^n_t)^{-1})\alpha_\s(Ad(X^n_t)dL(X^n_t),dL(X^a_t))\\
    & + & 2Ad((X^n_s)^{-1})Ad((X^a_t)^{-1}\alpha_\s(dL(X^k_t),Ad((X^a_tX^n_t)dL(X^n_t)).
  \end{eqnarray*}
  From Lemma \ref{technical} it follows that $\log{X_t}$ is given by
  \begin{eqnarray*}
    \log{X_t}
    & = &  \int_0^t Ad((X^a_s \cdot X^n_s)^{-1})dL(X^k_s) + \int_0^t Ad((X^n_s)^{-1})dL(X^a_s) + L(X^n_t) \\
    & + & \frac{1}{2} \int_0^t Ad((X^a_s \cdot X^n_s)^{-1})\alpha_\s(dL(X^k_s),dL(X^k_s))\\
    & + & \frac{1}{2} \int_0^t Ad((X^n_s)^{-1})\alpha_\s(dL(X^a_s),dL(X^a_s))\\
    & + & \frac{1}{2} \int_0^t \alpha_\s(dL(X^n_s),dL(X^n_s))\\
  \end{eqnarray*}
  From Proposition 5.1 in \cite{stelmastchuk} we conclude that
  \begin{eqnarray*}
    \log{X_t}  = \int_0^t Ad((X^a_s \cdot X^n_s)^{-1})d\log{(X^k_s)} + \int_0^t Ad((X^n_s)^{-1})d\log{(X^a_s)} + \log{(X^n_t)}.
  \end{eqnarray*}
  Assume now that $F_k$, $F_a$ and $F_n$ are harmonic maps. Consequently, $X^k_t$, $X^a_t$ and $X^n_t$ are $\nabla$-martingales. Hence $\log{X_t}$ is a local martingale in $\g$. Now, Corollary 3.8 in \cite{stelmastchuk} shows that $X_t$ is a $\nabla$-martingale in $G$. This gives that $F$ is a harmonic map. \qed
\end{proof}

As a particular case, since geodesics are harmonic maps, we have the following Corollary.

\begin{corollary}\label{geodesics}
  Under assumptions of Theorem above, if moreover $I$ is an interval of $\mathbb{R}$ and if $\gamma: I \rightarrow G$ is a smooth curve such that $\gamma = \gamma_k \cdot \gamma_a \cdot \gamma_n$, then $\gamma$ is a geodesic if and only if $\gamma_k$, $\gamma_a$ and $\gamma_n$ are geodesics.
\end{corollary}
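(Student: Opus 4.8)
The plan is to deduce this directly from Theorem \ref{maintheorem} by invoking the classical identification of geodesics with harmonic maps whose domain is one-dimensional. First I would equip $I \subseteq \mathbb{R}$ with the standard flat metric $dt^2$ and recall that for any smooth curve $\gamma$ into a Riemannian manifold $(N,h)$ the tension field reduces to the covariant acceleration, $\tau(\gamma) = \nabla_{\dot\gamma}\dot\gamma$, because $\nabla^{I}_{\partial_t}\partial_t = 0$ on the flat interval and the trace over the one-dimensional domain is computed against $\partial_t$ alone. Hence $\gamma$ is harmonic if and only if $\nabla_{\dot\gamma}\dot\gamma = 0$, that is, if and only if $\gamma$ is an (affinely parametrized) geodesic. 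This equivalence applies verbatim to $\gamma: I \rightarrow G$ and, separately, to each factor map $\gamma_k: I \rightarrow K$, $\gamma_a: I \rightarrow A$ and $\gamma_n: I \rightarrow N$, each taken with the Riemannian metric induced by the corresponding submersion.

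With this identification in hand, the corollary becomes a restatement of Theorem \ref{maintheorem} in the special case $(M,g) = (I, dt^2)$. Taking $M = I$ and $F = \gamma = \gamma_k \cdot \gamma_a \cdot \gamma_n$, the theorem asserts that $\gamma$ is harmonic if and only if $\gamma_k$, $\gamma_a$ and $\gamma_n$ are harmonic. Translating each occurrence of ``harmonic'' into ``geodesic'' via the equivalence of the previous step yields precisely the stated conclusion: $\gamma$ is a geodesic if and only if $\gamma_k$, $\gamma_a$ and $\gamma_n$ are geodesics.

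The only point requiring care, and the closest thing to an obstacle, is checking that the hypotheses of Theorem \ref{maintheorem} are genuinely met in this one-dimensional setting. Concretely, I would confirm that the factorization $\gamma = \gamma_k \cdot \gamma_a \cdot \gamma_n$ is exactly the Iwasawa factorization of $\gamma$ through the projections $\pi_K$, $\pi_A$, $\pi_N$, so that $\gamma_k$, $\gamma_a$, $\gamma_n$ are indeed the maps to which the theorem refers. Since the Riemannian submersion property of these projections is a statement about $G$, $K$, $A$, $N$ and the left invariant metric alone, and does not involve the domain, nothing new has to be proved there. Once this bookkeeping is settled, the corollary follows with no further computation.
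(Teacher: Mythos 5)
Your proposal is correct and follows essentially the same route as the paper, which derives the corollary by specializing Theorem \ref{maintheorem} to a one-dimensional domain and using the identification of geodesics with harmonic maps from an interval. Your write-up is in fact slightly more careful than the paper's one-line justification, since you make the full equivalence $\tau(\gamma)=\nabla_{\dot\gamma}\dot\gamma$ explicit (needed for both directions of the ``if and only if'') rather than only noting that geodesics are harmonic.
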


\subsection{Harmonic maps  with values in $SL(2,\mathbb{R})$}

In this section, we work with harmonic maps with values in $SL(2,\mathbb{R})$. We begin by introducing a condition that characterize harmonic maps with a values in a Lie Group. Let $M$ be a Riemannian manifold and $G$ a Lie group with a left invariant metric $<,>$. From Example 5.1 in \cite{stelmastchuk} we see that a smooth map $F:M \rightarrow G$ is harmonic if and only if
\begin{equation}\label{harmonicliegroup}
  d^{*}\omega_{F}  - \sum_{i=1}^{n}ad(\omega_{F}(e_{i}))^*(\omega_{F}(e_{i})) = 0,
\end{equation}
where $d^*$ is the codifferential of the Riemannian manifold $M$ and $\omega$ is the Maurer-Cartan form on $G$.

We now introduce the Iwasawa decomposition of $Sl(2,{\Bbb R})$. In fact, $Sl(2,{\Bbb R})$ can be written as the product $SO(2, {\Bbb R}) \cdot A \cdot N$ where
\[
  SO(2, {\Bbb R})
  =
  \left\{
  \left(
  \begin{array}{cc}
    \cos\theta & -\sin\theta\\
    \sin\theta & \cos\theta
  \end{array}
  \right), \theta \in [-\pi, \pi)
  \right\}
\]
\[
  A=
  \left\{
  \left(
  \begin{array}{cc}
    r & 0\\
    0& 1/r
  \end{array}
  \right), r>0
  \right\}, \ \
  \textrm{and}\ \
  N=
  \left\{
  \left(
  \begin{array}{cc}
    1 & x\\
    0& 1
  \end{array}
  \right), x \in {\Bbb R}
  \right\}.
\]
Thus for each $g \in SL(2, {\Bbb R})$ there exist $\theta \in [-\pi, \pi)$, $r>0$ and $x \in {\Bbb R}$ such that
\[
  g=\left(
  \begin{array}{cc}
    \cos\theta & -\sin\theta\\
    \sin\theta & \cos\theta
  \end{array}
  \right)
  \left(
  \begin{array}{cc}
    r & 0\\
    0 & 1/r
  \end{array}
  \right)
  \left(
  \begin{array}{cc}
    1 & x\\
    0 & 1
  \end{array}
  \right)
  =
  \left(
  \begin{array}{cc}
    r\cos\theta& xr\cos\theta - (1/r)\sin\theta\\
    r\sin\theta& xr\sin\theta + (1/r)\cos\theta
  \end{array}
  \right).
\]

Assume that $SL(2,\mathbb{R})$ is equipped with a left invariant metric $<,>$. Restricting the metric $<,>$ to $so(n, \mathbb{R})$, $\ma$ and $\mn$ we have left invariant metrics on $SO(n, \mathbb{R})$, $A$ and $N$. Since that $\mathfrak{so}(2,\mathbb{R})$, $\mathfrak{a}$ and $\mathfrak{n}^+$ are $1$-dimensional, we have that $ad^*_k = ad^*_a = ad^*_n = 0$, where $ad^*_k$, $ad^*_a$ and $ad^*_n$ are the adjoint operators of $ad_k$, $ad_a$ and $ad_n$ on $\mk, \ma$ and $\mn$, respectively.

Let $F: \mathbb{R}^n \rightarrow SL(2,\mathbb{R})$ be a smooth map. It is immediate that $F = F_1 \cdot F_2 \cdot F_3$, where $F_1: \mathbb{R}^n \rightarrow SO(2,\mathbb{R})$, $F_2: \mathbb{R}^n \rightarrow A$ and $F_3: \mathbb{R}^n \rightarrow N$. Thus there exist functions $f: \mathbb{R}^n \rightarrow [-\pi, \pi)$, $g:\mathbb{R}^n \rightarrow (0, \infty)$ and $h:\mathbb{R}^n \rightarrow \mathbb{R}$ such that
\[
  F_{1}(x)
  = \left(
  \begin{array}{cc}
    F_1^1(x) & -F_1^2(x)\\
    F_1^2(x) & F_1^1(x)
  \end{array}
  \right), \ \
  F_{2}(x) =
  \left(
  \begin{array}{cc}
    g(x) & 0\\
    0& \frac{1}{g(x)}
  \end{array}
  \right),
  \ \
  F_3(x) =
  \left(
  \begin{array}{cc}
    1 & h(x)\\
    0& 1
  \end{array}
  \right),
\]
where $F^1_1(x)=\rm cos(f(x))$, $F^1_2(x) =\rm sin(f(x))$ and $(F_1^1(x))^2+ (F_1^2(x))^2 = 1$.

From Theorem \ref{maintheorem} it follows that $F$ is a harmonic map with respect to metric $<,>$ if and only if so are $F_1$, $F_2$ and $F_3$. From equation (\ref{harmonicliegroup}) we see that $F_1$, $F_2$ and $F_3$ are harmonic maps if and only if
\begin{equation*}
  d^*\omega_{F_1} = 0 , \ \ d^*\omega_{F_2} = 0, \ \ \textrm{and} \ \ d^*\omega_{F_3} = 0,
\end{equation*}
respectively. Using the fact that $\omega(g) = g^{-1} dg$ we can written, respectively, these equations as
\[
  F_1(x)\Delta F_2(x) - F_2(x)\Delta F_1(x) = 0, \ \ \Delta (g(x)g(x)) = 0,\ \ \textrm{and} \ \Delta( h(x)h(x)) = 0.
\]

From the first equation we have $\Delta f(x)= 0$. Hence from the fundamental solution of Laplace's equation( see page 22 in \cite{evans}) we obtain
\begin{equation}\label{solution}
  f(x) = g^2(x) = h^2(x)
  \left\{
  \begin{array}{cc}
    -\frac{1}{2\pi} log(|x|)&, n=2\\
    \frac{1}{(n(n-2))\alpha(n)} \frac{1}{|x|^{n-2}}&, n \geq 3
  \end{array}
  \right.,
\end{equation}
where $\alpha(n)$ is the volume of the unit ball in $\mathbb{R}^n$ for $n \geq 3$.

Our next step is to study the domain of $F$. Firstly, suppose that $n=2$. Since equation (\ref{solution}) blows up at 0, we need to isolate the singularity inside a small ball. So fix $\epsilon>0$. Observing the image of functions $f, g$ and $h$ we deduce that the domain of $F$, denoted by $D_F$, is $D_F = B_{\mathbb{R}^2}(0,1) - B_{\mathbb{R}^2}(0,\epsilon)$. In consequence, a smooth map $F: D_F\subset \mathbb{R}^2 \rightarrow SL(2,\mathbb{R})$ is harmonic if it is written by
\[
  F(x)=
  \left(
  \begin{array}{cc}
    \sqrt{t(x)}\cos(t(x)) & t(x)\sqrt{t(x)}\cos(t(x))-\frac{\sin(t(x))}{\sqrt{t(x)}}\\
    \sqrt{t(x)}\sin(t(x)) & t(x)\sqrt{t(x)}\sin(t(x))+\frac{\cos(t(x))}{\sqrt{t(x)}}
  \end{array}
  \right),
\]
where $t:D_F \rightarrow \mathbb{R}$ is a function given by $t(x) = -\frac{1}{2\pi} log(|x|)$.

In the case of $n \geq 3$, we take the domain $D_F$ as $\mathbb{R}^n - \overline{B_{\mathbb{R}^n}(0,1/(\pi n(n-2)\alpha(n)))} $. Hence a smooth map $F: D_F \rightarrow SL(2, \mathbb{R})$ is harmonic if $F$ is written as
\[
  F(x)=
  \left(
  \begin{array}{cc}
    \sqrt{s(x)}\cos(s(x)) & s(x)\sqrt{s(x)}\cos(s(x))-\frac{\sin(s(x))}{\sqrt{s(x)}}\\
    \sqrt{s(x)}\sin(s(x)) & s(x)\sqrt{s(x)}\sin(s(x))+\frac{\cos(s(x))}{\sqrt{s(x)}}
  \end{array}
  \right),
\]
where $s:D_F \rightarrow \mathbb{R}$ is a function given by $s(x) = \frac{1}{(n(n-2))\alpha(n)} \frac{1}{|x|^{n-2}}$.

In the case of $n=1$, geodesics are treat instead of harmonic maps. Let $I$ be an interval of $\mathbb{R}$ and $\gamma: I \rightarrow SL(2,\mathbb{R})$ a smooth curve such that $\gamma = \gamma_1 \cdot \gamma_2 \cdot \gamma_3$, where $\gamma_1: I \rightarrow SO(2,\mathbb{R})$, $\gamma_2: I \rightarrow A$ and $\gamma_3: I \rightarrow N$ are smooth curves. From Corollary \ref{geodesics} it follows that $\gamma$ is geodesic if and only if are so $\gamma_1$, $\gamma_2$ and $\gamma_3$.

Our next step is to found a solution for this problem. Let $\gamma:I \rightarrow SL(2,\mathbb{R})$ be a smooth curve with $\gamma(0) = Id$ and $I$ an interval. We beginning by writing
\[
  \gamma_{1}(t)
  = \left(
  \begin{array}{cc}
    x(t) & -y(t)\\
    y(t) & x(t)
  \end{array}
  \right), \ \
  \gamma_{2}(t) =
  \left(
  \begin{array}{cc}
    a(t) & 0\\
    0& \frac{1}{a(t)}
  \end{array}
  \right),
  \ \
  \gamma_3(t) =
  \left(
  \begin{array}{cc}
    1 & n(t)\\
    0& 1
  \end{array}
  \right),
\]
where $x(t)=\rm cos(\eta(t))$, $y(x) = \rm sin(\eta(t))$ and $(\gamma_1^1(t))^2+ (\gamma_1^2(t))^2 = 1$. From equation (\ref{harmonicliegroup}) we see that $\gamma_1$, $\gamma_2$ and $\gamma_3$ are harmonic maps if and only if
\begin{equation*}
  \frac{d}{dt}(\gamma_1^{-1}\dot{\gamma}_1) = 0, \ \ \frac{d}{dt}(\gamma_2^{-1}\dot{\gamma}_2) = 0, \ \  \frac{d}{dt}(\gamma_3^{-1}\dot{\gamma}_3) = 0,
\end{equation*}
respectively. Using the fact that $\omega(g) = g^{-1} dg$ we can written, the first equation gives the following differential equation system
\begin{eqnarray*}
  \dot{x}(t)\dot{x}(t)+ x(t)\ddot{x}(t) + \dot{y}(t)\dot{y}(t)+ \ddot{y}(t) & = & 0\\
  \dot{x}(t)\ddot{y}(t)-\dot{y}(t)\ddot{x}(t) & = & 0,
\end{eqnarray*}
which as solution $\ddot{\eta}(t) = 0$. Thus under initial conditions $\gamma_1(0) =\gamma_2(0) = \gamma_3(0)= Id$ we obtain as solutions
\[
  \gamma_{1}(t)
  = \left(
  \begin{array}{cc}
    \cos(at) & -\sin(at)\\
    \sin(at) & \cos(at)
  \end{array}
  \right), \ \
  \gamma_{2}(t) =
  \left(
  \begin{array}{cc}
    e^{ct} & 0\\
    0& \frac{1}{e^{ct}}
  \end{array}
  \right), \ \
  \gamma_3(t) =
  \left(
  \begin{array}{cc}
    1 & kt\\
    0& 1
  \end{array}
  \right),
\]
which give
\[
  \gamma(t)=
  \left(
  \begin{array}{cc}
    e^{ct}\cos(at) & kte^{ct}\cos(at)-\frac{\sin(at)}{e^{ct}}\\
    e^{ct}\sin(at) & kte^{ct}\sin(at)+\frac{\cos(at)}{e^{ct}}
  \end{array}
  \right),
\]
where  $a,c,e \in \mathbb{R}$ are constants given by initial condition $\dot{\gamma}(0)$ and $t \in \mathbb{R}$.

\end{document}